\definecolor{darkblue}{rgb}{0,0,0.75}
\definecolor{darkred}{rgb}{0.75,0,0}
\definecolor{darkgreen}{rgb}{0,0.75,0}   
\numberwithin{equation}{section}
\newtheorem*{question}{Question}
\newtheorem{thm}{Theorem}[section]
\newtheorem{lem}[thm]{Lemma}
\newtheorem{prop}[thm]{Proposition}
\theoremstyle{definition}
\newtheorem{defi}[thm]{Definition}
\numberwithin{equation}{section}
\newcommand{\att}{(\cdot,t)}
\title{Huisken's Distance Comparison Principle in Higher Codimension}
\author{Qi Sun}
\thanks{This work was partially supported by NSF grant DMS-2348305.}
\date{\today}
\address{Qi Sun, Department of Mathematics, University of Wisconsin-Madison}
\email{qsun79@wisc.edu}
\begin{document}
\begin{abstract}
We establish a variant of Huisken's distance comparison principle for reflection symmetric immersed Curve Shortening flow in $\mathbb R^n,n\geq2$.

As an application, we show that certain symmetric Curve Shortening flow with a one-to-one convex projection develops a Type~I singularity and becomes asymptotically circular. 
\end{abstract}
\maketitle
\section{Introduction}
\subsection{Setup}
\subsubsection*{Huisken's distance ratio}
Following Huisken\cite{huisken1998distance}, for a closed space curve $\gamma:S^1\rightarrow\mathbb R^n$, let us define the \emph{extrinsic distance} to be
\begin{equation}
    d=d(p,q)=|\gamma(p)-\gamma(q)|,
\end{equation}
the \emph{intrinsic distance} $l=l(p,q)$ to be the length of one arc between two different points $\gamma(p),\gamma(q)$ and a \emph{modification of the intrinsic distance} to be
\begin{equation}
\label{definition of psi}
\psi=\psi(p,q)=\frac{L}{\pi}\sin(\frac{\pi l}{L}),  
\end{equation}
where $L$ is the (total) length of the curve $\gamma$. Here $\psi$ is well defined and is independent of the choice of the two arcs between $\gamma(p)$ and $\gamma(q)$ because of the identity $\sin\alpha=\sin(\pi-\alpha)$.

We define \emph{Huisken's distance ratio} between the points $\gamma(p)$ and $\gamma(q)$ to be 
\begin{equation}
\label{definition of Huisken's distance ratio}
    \frac{d}{\psi}(p,q)=\frac{|\gamma(p)-\gamma(q)|}{\frac{L}{\pi}\sin(\frac{\pi l}{L})}
\end{equation}
and we denote by 
\begin{equation}
    \min\frac{d}{\psi}:=\inf_{\substack{(p,q)\in S^1\times S^1\\p \neq q}} \frac{d}{\psi}(p,q)
\end{equation}
the \emph{minimum value} of Huisken's distance ratio.

\subsubsection*{\texorpdfstring{Curve Shortening flow in $\mathbb R^n$}{Curve Shortening flow in higher codimension}}
Let us consider the Space\footnote{The word “space” is used only to emphasize that we are working in higher codimension; it does not denote a different flow.} Curve Shortening flow (SCSF),
\begin{equation}
\label{equation of CSF}
    \gamma_t=\gamma_{ss}
\end{equation}
where $\gamma:S^1\times \left[0,T\right)\rightarrow\mathbb{R}^n$ is smooth ($S^1=\mathbb R/2\pi\mathbb Z$), $u\rightarrow \gamma(u,t)$ is an immersion and $\partial_s=\frac{\partial}{\partial s}$ is the derivative with respect to arc-length, defined by
\begin{equation}
\label{the equation defining the arc length}
    \frac{\partial}{\partial s}:=\frac{1}{|\gamma_u|}\frac{\partial}{\partial u}.
\end{equation}

Along SCSF $\gamma\att$, Huisken's distance ratio can be considered at each time $t$ and we use $\min\frac{d}{\psi}$ to denote the minimum value of Huisken's distance ratio at time $t$, with the dependence on time $t$ left implicit.
\subsection{Background}
Huisken \cite{huisken1998distance} proved that $\min\frac{d}{\psi}$ is non-decreasing for planar embedded CSF and gave an alternative proof of the Gage-Hamilton-Grayson theorem \cite{GageHamilton,Grayson} based on the blow-up results. 
In \cite{AndrewsBryan+2011+179+187} Andrews and Bryan refined the argument of Huisken and gave a direct proof of the Gage-Hamilton-Grayson theorem. 
In \cite{bryan2023sharp} Bryan, Langford and Zhu proved that methods of \cite{AndrewsBryan+2011+179+187} work for CSF on the two dimensional sphere. See  \cite{edelen2015noncollapsing,johnson24singularity} who extended Huisken's distance comparison principle to CSF in surfaces. See also \cite{langford2023distance,bian2024distance}.

In the higher codimensional case, one \emph{cannot} expect $\min\frac{d}{\psi}$ to be non-decreasing for all SCSF starting with embedded curves. Altschuler \cite{altschuler1991singularities} pointed out that there are embedded space curves that evolve to have self-intersections under SCSF at some positive time. At such time, $\min\frac{d}{\psi}$ vanishes because of self-intersections. So $\min\frac{d}{\psi}$ is positive at initial time but becomes zero later, thus is not non-decreasing. Even for SCSF that remains embedded, one cannot expect such monotonicity because the multiplicity of the blow-up limit could be two or higher. Examples in \cite[\S3]{altschuler2013zoo} can be used to illustrate this explicitly.

Though the distance comparison principle cannot be generalized to SCSF directly, the assumption of reflection symmetry allows us to establish a variant of this principle that we explain now.
\subsection{Main result}
Let $P_0\subset\mathbb R^n$ be a hyperplane with $R_0:\mathbb R^n\rightarrow\mathbb R^n$ the reflection about $P_0$. 
\begin{defi}
    We say that a curve $\gamma:S^1\rightarrow\mathbb R^n$ is \emph{symmetric two-crossing} if it is $R_0$ invariant and it intersects the hyperplane $P_0$ exactly twice. 
\end{defi}

\begin{defi}
\label{loose definition of the symmetric distance ratio}
For a symmetric two-crossing curve $\gamma:S^1\rightarrow\mathbb R^n$, we define the \emph{(reflection) symmetric distance ratio} $I$ at $\gamma(p)$ to be
\begin{equation}
    I(p)=\frac{d}{\psi}(p,R_0(p))
\end{equation}
where $R_0(p)$ denotes the parameter such that\footnote{For curves with self-intersections, the condition $\gamma(R_0(p))=R_0\gamma(p)$ does not uniquely define $R_0(p)$, because there are points that are reflection symmetric extrinsically but not intrinsically. An unambigous definition of $R_0(p)$ is given in equation  (\ref{notion of symmetric points}).} $\gamma(R_0(p))=R_0\gamma(p)$ and $\frac{d}{\psi}$ is defined in equation (\ref{definition of Huisken's distance ratio}).
\end{defi}
We denote by $\gamma:S^1\times \left[0,T\right)\rightarrow\mathbb{R}^n$ the solution to the SCSF with the initial condition $\gamma(u,0)=\gamma_0(u)$, by $I\att$ the symmetric distance ratio of SCSF $\gamma\att$ and by $(\min I)(t)$ the minimum value of the ratio $I(\cdot,t)$.  Our main result is:
\begin{thm}
\label{n dim-monotonicity of Huisken's distance ratio restricted to symmetric points}
If the initial curve $\gamma_0$ is symmetric two-crossing, then the SCSF $\gamma\att$ is also symmetric two-crossing for each time $t\in[0,T)$. More importantly, the minimum of the symmetric distance ratio $(\min I)(t)$ is positive and non-decreasing with respect to time $t$.
\end{thm}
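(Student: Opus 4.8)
The plan is to use the reflection symmetry to reduce the two-point quantity $I$ to a genuinely one-variable function on the circle, and then run a maximum-principle (Hamilton's trick) argument. First I would fix the parametrization so that the domain involution is the standard reflection $\rho(u)=-u$ on $S^1=\mathbb R/2\pi\mathbb Z$. Writing the mirror as $P_0=\{x:\langle x,\nu\rangle=c\}$ with unit normal $\nu$, so that $R_0x=x-2(\langle x,\nu\rangle-c)\nu$, both $u\mapsto\gamma(-u,t)$ and $u\mapsto R_0\gamma(u,t)$ solve \eqref{equation of CSF} with the same initial data (the equation is invariant under the ambient isometry $R_0$ and under $u\mapsto-u$), so uniqueness forces $\gamma(-u,t)=R_0\gamma(u,t)$ for all $t$; hence $R_0$-invariance persists and $R_0(p)=-p$ is unambiguous. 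To control the crossings I set $f(u,t)=\langle\gamma(u,t),\nu\rangle-c$; a direct computation gives $\partial_t f=\partial_s^2 f$, a linear parabolic equation on the evolving circle, while $f\circ\rho=-f$ forces $f$ to vanish identically at the two fixed points $u=0,\pi$ for all time. Sturm's theorem (in the form of Angenent) makes the number of zeros of $f(\cdot,t)$ non-increasing; since it starts at $2$ and the two fixed points are always zeros, it stays exactly $2$ and transverse, which proves that SCSF remains symmetric two-crossing.

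The structural heart of the reduction is that the symmetric chord is perpendicular to $P_0$: since $\gamma(p)-R_0\gamma(p)=2f(p)\nu$, the extrinsic distance along symmetric pairs is simply $d(p,R_0p)=2|f(p)|$. Therefore $I(p)=2|f(p)|/\Psi(p)$, with $\Psi(p):=\psi(p,-p)$, is a function of the single variable $p$. Near a crossing one has $d\sim l\sim\Psi$, so $I$ extends continuously with value $1$ at $u=0,\pi$, while away from the crossings $f\neq0$ and $\Psi>0$ force $I>0$. Thus $I(\cdot,t)$ is continuous and positive on the compact circle, so $(\min I)(t)$ is attained and positive; this already gives the positivity half of the theorem.

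For monotonicity I would use Hamilton's trick: since $\rho$ is time-independent it suffices to show $\partial_t I\ge0$ at a spatial minimizer $p_0$. Using the scalar heat equation $\partial_t(2f)=\partial_s^2(2f)$ together with $2f=I\Psi$ yields the exact identity $\partial_t I=I''+\tfrac{2I'\Psi'}{\Psi}+\tfrac{I}{\Psi}(\Psi''-\partial_t\Psi)$, where $'$ is the arclength derivative at $p_0$. At the minimizer $I'=0$ and $I''\ge0$, so
\[
\partial_t I(p_0)\ \ge\ \frac{I}{\Psi}\bigl(\Psi''-\partial_t\Psi\bigr).
\]
A short computation gives $\Psi''=-\tfrac{4\pi^2}{L^2}\Psi$, and with $\lambda=l(p_0,-p_0)$, $\theta=\pi\lambda/L$, $\partial_t L=-\int_0^L|\gamma_{ss}|^2\,ds$, and $\partial_t\lambda=-\int_{\mathrm{arc}}|\gamma_{ss}|^2\,ds$ one finds
\[
\Psi''-\partial_t\Psi=-\frac{4\pi^2}{L^2}\Psi-\frac{\partial_t L}{L}\bigl(\Psi-\lambda\cos\theta\bigr)-\cos\theta\,\partial_t\lambda .
\]
Choosing $\lambda$ to be the shorter arc makes $\cos\theta\ge0$, and one checks $\Psi-\lambda\cos\theta>0$; monotonicity thus reduces to showing this expression is non-negative at $p_0$.

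The principal obstacle is exactly this curvature estimate, and I expect it — rather than the reduction or the maximum-principle steps — to be where the real work lies. A naive appeal to Fenchel's inequality $\int|\gamma_{ss}|\ge2\pi$ (hence $\int_0^L|\gamma_{ss}|^2\ge4\pi^2/L$) is not by itself enough, since the arc-curvature $\int_{\mathrm{arc}}|\gamma_{ss}|^2$ can be small for a nearly straight arc. The resolution I would pursue uses the first-order condition at $p_0$: differentiating $2f=I\Psi$ and using $\omega=\nu$ gives $\langle\nu,T(p_0)\rangle=\langle\nu,T(-p_0)\rangle=I\cos\theta$ (where $T=\gamma_s$), whence $\langle T(p_0),T(-p_0)\rangle=2I^2\cos^2\theta-1$, so the total turning of each arc joining the symmetric pair is at least $2\arccos(I\cos\theta)$. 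Feeding this lower bound through Cauchy–Schwarz to bound $\int_{\mathrm{arc}}|\gamma_{ss}|^2$ from below, and combining it with the Fenchel bound on the whole curve, is the higher-codimension analogue of Huisken's sharp lemma, and making the constants close is the crux. If the bare pointwise inequality proves too delicate, I would instead run the Andrews–Bryan comparison against an explicit evolving model, recovering monotonicity of $(\min I)(t)$ as the special case of a constant comparison function.
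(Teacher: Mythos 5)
Your proposal is correct and is essentially the paper's own proof: the same symmetry-plus-Sturm argument for preservation of the two-crossing property, the same reduction of $I$ to a one-variable quantity satisfying a scalar heat-type identity, Hamilton's trick at a spatial minimizer, Fenchel's theorem for the whole-curve term, and the first-variation/tangent-indicatrix bound for the arc term. The step you flag as the crux closes exactly as you set it up, with no slack in the constants: $I\le1$ at the minimizer (since $I\to1$ at the crossings) and $\cos\theta\ge0$ give $\arccos(I\cos\theta)\ge\theta$, so Cauchy--Schwarz yields $\int_{\mathrm{arc}}k^2\,ds\ \ge\ 4\arccos^2(I\cos\theta)/\lambda\ \ge\ 4\theta^2/\lambda\ =\ 4\pi^2\lambda/L^2$, which is precisely what your Fenchel reduction requires, so the Andrews--Bryan fallback is unnecessary.
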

We do not require a symmetric two-crossing curve $\gamma$ to be embedded. Even if the initial curve $\gamma_0$ is $R_0$ invariant and embedded, SCSF $\gamma\att$ still can develop self-intersections, which arise in pairs and do not lie on the hyperplane $P_0$.

Let us explain why we require that the initial curve $\gamma_0$ intersects $P_0$ exactly twice.
If the initial curve $\gamma_0$ is $R_0$ invariant but it intersects the hyperplane $P_0$ strictly more than twice, then the SCSF $\gamma\att$ has a self-intersection on the hyperplane $P_0$ and $(\min I)(t)$ is zero which is monotonic trivially, unless the intersection number between $\gamma\att$ and $P_0$ decreases to two. The intersection number is non-increasing by the Sturmian theorem \cite{angenent1988zero} (see also \cite[Lemma 5.2]{sun2024curve}).

We will prove Theorem \ref{n dim-monotonicity of Huisken's distance ratio restricted to symmetric points} in \S \ref{the section on adaptions of Huisken's distance comparison principle}. The point we wish to make here is that Theorem \ref{n dim-monotonicity of Huisken's distance ratio restricted to symmetric points} can be proved using the arguments as those presented in \cite{huisken1998distance}, with adjustments specific to the symmetric case.
\subsection{Application}

\subsubsection*{Convex projection}
Let $P_{xy}:\mathbb{R}^n=\mathbb{R}^2\times\mathbb R^{n-2}\rightarrow\mathbb{R}^2$ be the orthogonal projection onto the first two coordinates, which we call $x$ and $y$. For a space curve $\gamma$, let $P_{xy}|_\gamma:\gamma\rightarrow xy$-plane be its restriction to $\gamma$.

\begin{defi}
We say that a curve $\gamma\subset\mathbb R^n$ has \emph{a one-to-one convex projection} (onto the $xy$-plane) if $P_{xy}|_\gamma$ is injective and the projection curve $P_{xy}(\gamma)$ is convex. 
\end{defi}
\subsubsection*{Types of singularity formation}
\begin{defi}
As $t\rightarrow T$, we will say SCSF $\gamma\att$ develops a \emph{Type~I singularity} if
\begin{equation*}
    \limsup_{t\rightarrow T}\sup_{u\in S^1}k^2(u,t)(T-t)<+\infty
\end{equation*}
and a \emph{Type~{II} singularity} otherwise, where $k(u,t)$ is the curvature at the point $\gamma(u,t)$.
\end{defi}
The author proved in \cite{sun2024curve} that if the initial curve $\gamma_0$ has a one-to-one convex projection, then SCSF $\gamma\att$ has a one-to-one convex projection and shrinks to a point as $t\rightarrow T$. A natural question arises whether such SCSF actually develops a Type~I singularity and becomes asymptotically circular as $t\rightarrow T$. We answer this question affirmatively here for a special case, making use of the monotonicity of the symmetric distance ratio (Theorem \ref{n dim-monotonicity of Huisken's distance ratio restricted to symmetric points}).

We restrict to $\mathbb R^3$ for simplicity. 
\begin{thm}
\label{theorem on singularites for symmetric case}
If the initial curve $\gamma_0$ has a one-to-one convex projection onto the $xy$-plane and it is reflection symmetric about the $xz$-plane and $yz$-plane, then SCSF $\gamma\att$ develops a Type~I singularity and becomes asymptotically circular as $t\rightarrow T$.
\end{thm}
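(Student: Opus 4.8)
The plan is to feed the double reflection symmetry into Theorem~\ref{n dim-monotonicity of Huisken's distance ratio restricted to symmetric points} twice and then run a blow-up in which the resulting scale-invariant chord-arc bounds force every tangent flow to be a round circle. Write $R_{xz}$ and $R_{yz}$ for the reflections about the $xz$- and $yz$-planes. Since $P_{xy}(\gamma_0)$ is injective and convex, the projected curve is a convex planar curve symmetric about both coordinate axes, and a convex curve meets each axis of symmetry exactly twice; pulling this back shows that $\gamma_0$ is symmetric two-crossing with respect to both $R_{xz}$ and $R_{yz}$. Applying Theorem~\ref{n dim-monotonicity of Huisken's distance ratio restricted to symmetric points} to each reflection produces symmetric distance ratios $I_{xz}$ and $I_{yz}$ whose minima are positive and non-decreasing, hence bounded below by their positive initial values $c_1,c_2>0$ for all $t\in[0,T)$. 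By \cite{sun2024curve} the flow keeps a one-to-one convex projection and contracts to a single point $p_0$; by the two symmetries $p_0$ is fixed by both $R_{xz}$ and $R_{yz}$, so it lies on the $z$-axis.

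The key structural fact is that $d/\psi$ is invariant under parabolic rescaling, since $d$, $l$ and $L$ all scale linearly. Centering the rescaling at $(p_0,T)$ commutes with the linear reflections fixing $p_0$, so the rescaled flow retains both symmetries and its symmetric ratios remain bounded below by $c_1,c_2$. By Huisken's monotonicity formula every tangent flow at $(p_0,T)$ is a self-shrinker, and the chord-arc bounds are exactly what rules out the degenerate shrinkers: a tangent flow of multiplicity two or higher, or one containing a pair of symmetric points whose extrinsic distance stays bounded while their intrinsic separation (hence $\psi$) diverges, would drive a symmetric ratio to $0$, contradicting $c_1,c_2>0$. Thus the tangent flow is a smooth, embedded, closed self-shrinker of multiplicity one. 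A smooth multiplicity-one tangent flow has uniformly bounded rescaled curvature, which is precisely the Type~I condition, so the singularity is Type~I.

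It remains to identify the limiting self-shrinker $\Sigma$, which is a closed embedded self-shrinker still carrying a one-to-one convex projection and the symmetries about the $xz$- and $yz$-planes. Here I would invoke Huisken's rigidity in the equality case: since $d/\psi$ is scale-invariant, it is constant in rescaled time along the self-similarly shrinking $\Sigma$, so $\min I_{xz}$ and $\min I_{yz}$ are stationary on $\Sigma$; feeding this into the strong maximum principle for the parabolic inequality underlying Theorem~\ref{n dim-monotonicity of Huisken's distance ratio restricted to symmetric points} forces the minimizing configurations to be degenerate. Combined with the convex projection and the fact that the round circle is the unique embedded closed curve with $d/\psi\equiv 1$, this pins $\Sigma$ down to a round circle of the appropriate radius. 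Uniqueness of the limit promotes subsequential convergence to full convergence of the rescaled flow, so $\gamma(\cdot,t)$ is asymptotically circular as $t\to T$.

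The step I expect to be the main obstacle is this last identification: in higher codimension one must exclude non-planar closed self-shrinkers, and the equality analysis is delicate because the symmetric distance ratio compares a point only with its single reflected partner rather than with all pairs, so both symmetries together with the convex-projection constraint must be used to recover the planar round-circle rigidity. By contrast, the multiplicity-one reduction and the passage to Type~I in the middle paragraph are comparatively routine once the scale-invariant chord-arc bounds are available.
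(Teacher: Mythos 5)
There is a genuine gap, and it sits exactly at the step you dismiss as ``comparatively routine'': the reduction to a smooth, multiplicity-one, compact tangent flow (equivalently, to Type~I). Your argument is that a multiplicity-two collapse, or a pair of points with bounded extrinsic but diverging intrinsic separation, ``would drive a symmetric ratio to $0$.'' This does not follow: Theorem~\ref{n dim-monotonicity of Huisken's distance ratio restricted to symmetric points} controls $d/\psi$ \emph{only} along pairs $(q,R_0(q))$ that are reflection partners in the intrinsic sense of equation~(\ref{notion of symmetric points}). A sheet of the curve doubling back on itself produces two strands at vanishing distance whose points are in general \emph{not} reflection partners of each other, so no symmetric ratio degenerates and your bounds $c_1,c_2$ are untouched. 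This is precisely why the paper does not argue through tangent flows at this stage: it assumes Type~II, invokes Altschuler's blow-up \cite{altschuler1991singularities} to get a grim reaper, uses \cite[Corollary 5.8]{sun2024curve} to rule out grim reapers in vertical planes, and then uses the one-to-one convex projection (via a total-curvature count: four asymmetric grim-reaper pieces would force the projection to have total curvature at least $4(\pi-\delta)>2\pi$) to force the grim reaper to be symmetric about the $xz$- or $yz$-plane. Only for a \emph{symmetric} grim reaper do the two far-out strands consist of reflection partners, so that $d$ stays bounded while $\psi$ diverges and Theorem~\ref{n dim-monotonicity of Huisken's distance ratio restricted to symmetric points} is contradicted. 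Your proposal never performs this symmetrization step, and without it the symmetric ratio cannot rule out either Type~II behavior or higher multiplicity. A secondary problem in the same paragraph: before Type~I is established, tangent flows exist only as weak (Brakke) limits, so ``smooth, embedded, closed self-shrinker of multiplicity one'' is not something you can extract from the symmetric ratio alone, and the inference ``smooth multiplicity-one tangent flow $\Rightarrow$ Type~I'' needs a local regularity theorem whose hypotheses you have not verified.

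The final identification step is also flawed as written. Since $d/\psi$ is scale-invariant, $\min I$ is automatically constant in time on \emph{every} self-similarly shrinking solution, so ``$\min I_{xz}$ and $\min I_{yz}$ are stationary on $\Sigma$'' carries no information, and the appeal to a strong-maximum-principle/equality-case rigidity is vacuous as stated; likewise you never establish $d/\psi\equiv1$, so uniqueness of the circle among curves with $d/\psi\equiv1$ cannot be invoked. The paper's route is more elementary: once Type~I is known, tangent flows are smooth self-shrinkers by Huisken's argument \cite{huisken1990asymptotic}; all shrinking curves in $\mathbb R^n$ are planar \cite[Lemma 5.1]{altschuler2013zoo}, hence Abresch--Langer curves \cite{abresch1986normalized}; among these only the multiplicity-one circle has a one-to-one convex projection; and asymptotic circularity then follows from Schulze's uniqueness of tangent flows \cite{schulze2014uniqueness}. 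Your first paragraph (double application of Theorem~\ref{n dim-monotonicity of Huisken's distance ratio restricted to symmetric points}, two-crossing verification via the convex projection, scale invariance of the ratio) is correct and consistent with the paper's setup, but the two later steps need to be replaced by the grim-reaper exclusion and the planar-shrinker classification to obtain a complete proof.
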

\begin{figure}[ht]
\centering
\begin{minipage}{0.3\textwidth}
\centering
\includegraphics[width=\linewidth]{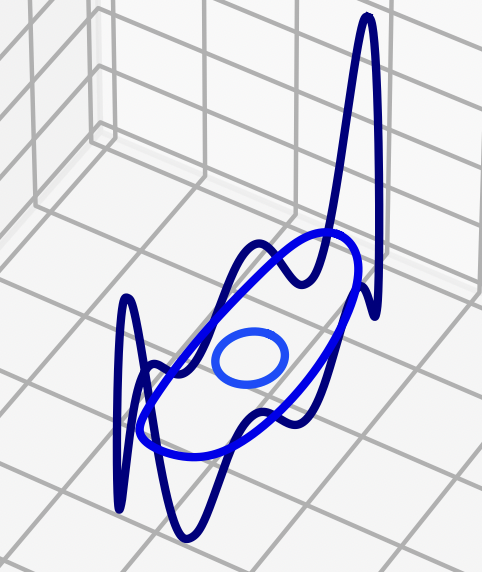}
\end{minipage}\hfill
\begin{minipage}{0.35\textwidth}
\centering
\includegraphics[width=\linewidth]{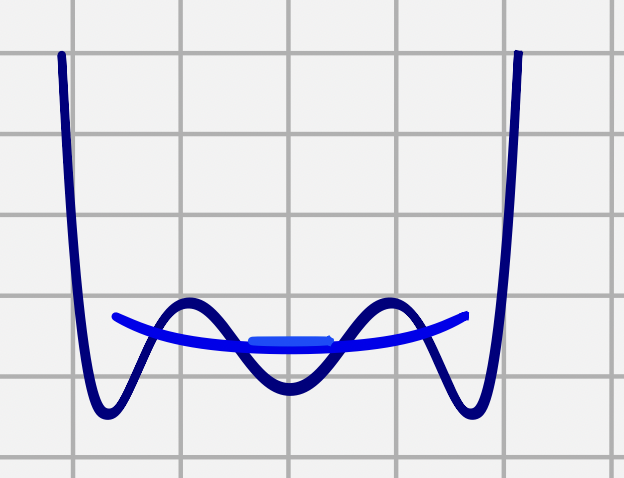}
\textbf{$xz$-projection}
\end{minipage}\hfill
\begin{minipage}{0.25\textwidth}
\centering
\includegraphics[width=\linewidth]{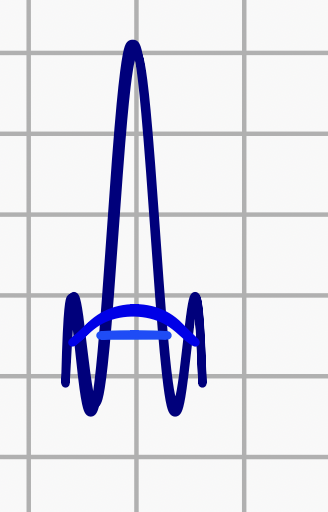}
\textbf{$yz$-projection}
\end{minipage}
\caption{Snapshots of the evolution of 
$(\cos u,0.3\sin u,0.5\cos2u+0.5\cos4u+0.5\cos6u),u\in[0,2\pi]$
from different angles.}
\label{fig:three_images}
\end{figure}
See Figure \ref{fig:three_images} for an example illustrating Theorem \ref{theorem on singularites for symmetric case} numerically.

We will prove Theorem \ref{theorem on singularites for symmetric case} in \S \ref{section on proof of singularities}.

In a forthcoming paper \cite{sun2025singularities}, the author will analyze the singularies of SCSF with convex projections more generally.
\section{Monotonicity of the symmetric distance ratio}
\label{the section on adaptions of Huisken's distance comparison principle}
We first prove the preservation of the symmetric two-crossing condition under SCSF.
\begin{lem}
\label{the lemma on preservation of symmetric two-crossing condition}
If the initial curve $\gamma_0$ is symmetric two-crossing, then the SCSF $\gamma\att$ is also symmetric two-crossing for each time $t\in[0,T)$.
\end{lem}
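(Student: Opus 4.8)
The plan is to prove preservation of the symmetric two-crossing condition in two separate parts, corresponding to the two defining properties: $R_0$-invariance, and the exactly-two intersection count with $P_0$. These are genuinely different in character — one is about symmetry of the flow, the other about an intersection count — so I would treat them in sequence.

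First I would establish that $R_0$-invariance is preserved. The key observation is that the SCSF equation $\gamma_t = \gamma_{ss}$ is geometric: it is invariant under rigid motions of $\mathbb{R}^n$, and in particular under the reflection $R_0$. Concretely, if $\gamma(\cdot,t)$ solves the flow, then so does $R_0 \circ \gamma(\cdot,t)$ (after the appropriate reparametrization that $R_0$ induces on $S^1$), because arc-length and the second derivative with respect to arc-length both transform equivariantly under an isometry. Since the initial curve $\gamma_0$ is $R_0$-invariant, the two solutions $\gamma(\cdot,t)$ and $R_0\circ\gamma(\cdot,t)$ agree at $t=0$, and by uniqueness of the smooth solution to SCSF starting from a given immersed initial curve, they coincide for all $t\in[0,T)$. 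Hence $\gamma(\cdot,t)$ remains $R_0$-invariant. I would state the uniqueness input explicitly, since it is the crux of this step.

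For the intersection count, I would track the signed function measuring the position of $\gamma(\cdot,t)$ relative to the hyperplane $P_0$: writing $P_0 = \{x : \langle x - x_0, \nu\rangle = 0\}$ for a unit normal $\nu$, set $f(u,t) = \langle \gamma(u,t) - x_0, \nu\rangle$. A direct computation shows $f$ satisfies a linear parabolic equation of the form $f_t = f_{ss} + (\text{lower order})$ along the flow — indeed $\partial_t \langle \gamma, \nu\rangle = \langle \gamma_{ss}, \nu\rangle$, and expressing $\partial_{ss}$ in the moving parametrization yields such an equation. The zeros of $f(\cdot,t)$ are exactly the intersection points of the curve with $P_0$. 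By the Sturmian theorem of Angenent \cite{angenent1988zero} (cited in the excerpt), the number of zeros of a solution to such a linear parabolic equation is non-increasing in time. Therefore the intersection number with $P_0$ cannot exceed its initial value of two.

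The main obstacle, and the step requiring the most care, is ruling out that the intersection count \emph{drops below} two — that is, showing it stays exactly two rather than becoming zero. Here I would use the preserved $R_0$-symmetry together with a topological or continuity argument: an $R_0$-invariant closed curve that has a point strictly on one side of $P_0$ must, by symmetry, have its reflected point strictly on the other side, forcing at least one crossing; and a closed curve cannot cross an affine hyperplane an odd number of times, so the count is at least two whenever the curve is not entirely contained in $P_0$. One must also verify the curve never degenerates into $P_0$, which follows because $\gamma_0$ is not contained in $P_0$ (it crosses transversally, being two-crossing) and tangency to $P_0$ would register as a multiple zero of $f$, again controlled by the Sturmian count. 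Combining the upper bound from Sturm with this symmetry-forced lower bound pins the intersection number at exactly two for all $t\in[0,T)$.
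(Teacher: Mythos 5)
Your proof is correct, and its first two steps coincide with the paper's: preservation of $R_0$-invariance via the geometric invariance of $\gamma_t=\gamma_{ss}$ under isometries plus uniqueness of solutions, and the Sturmian theorem applied to $f(u,t)=\langle\gamma(u,t)-x_0,\nu\rangle$ to cap the intersection count at two. Where you genuinely diverge is the lower bound, i.e.\ ruling out that the count drops below two. You argue at the level of the image: the curve stays $R_0$-invariant and is not contained in $P_0$, hence has points strictly on both sides of $P_0$, and a continuous function on $S^1$ taking both signs has at least two zeros. The paper instead marks parameter values: writing $\gamma_0\cap P_0=\{\gamma_0(a),\gamma_0(b)\}$, it notes that by symmetry the \emph{same} parameters satisfy $\gamma(a,t),\gamma(b,t)\in P_0$ for all $t$ (the uniqueness argument gives $R_0\gamma(u,t)=\gamma(\sigma(u),t)$ for a time-independent involution $\sigma$ fixing $a$ and $b$), so these two intersections persist. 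Both arguments are valid. The paper's is shorter and needs no parity or intermediate-value discussion; yours avoids any bookkeeping of the induced involution on the parameter circle, but in exchange you must explicitly exclude the degenerate case $\gamma(\cdot,t)\subset P_0$. On that point your justification (``tangency would register as a multiple zero of $f$'') is the one loose spot: the clean statement is that Angenent's theorem already yields that the zero set of $f(\cdot,t)$ is finite for $t>0$ whenever $f(\cdot,0)\not\equiv0$, which rules out total degeneration outright; since the paper's application of the Sturmian bound presupposes the same finiteness, this is a shared ingredient rather than a gap peculiar to your route.
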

\begin{proof}
Symmetry is preserved by SCSF, so $\gamma\att$ is also reflection symmetric.

By the Sturmian theorem\cite{angenent1988zero}, the SCSF $\gamma\att$ intersects $P_0$ at no more than two points, see also \cite[Lemma 5.2]{sun2024curve}.

Say $\gamma_0\cap P_0=\{\gamma_0(a),\gamma_0(b)\}$. By symmetry, $\gamma(a,t),\gamma(b,t)\in P_0$ for $t\in[0,T)$.

Thus $\gamma\att$ intersects $P_0$ at exactly two points.
\end{proof}
\subsection*{Notation}
We can track the movement of the intersection points between $\gamma\att$ and $P_0$.
Let us label the intersection points:
\begin{equation}
\label{track the movement of the intersection points}
\gamma\att\cap P_0=\{A(t),B(t)\},  
\end{equation}
where $A,B: [0,T)\rightarrow \mathbb R^n$ are smooth. We may assume that the arc-length parameter\footnote{Because we have chosen the points where $s=0$, we are able to use the arc-length $s=s(q,t)$ as a parameter.} $s$ equals $0$ at the point $A(t)$ for all $t\in[0,T)$.

Recall that we use $L(t)$ to denote the length of the whole curve $\gamma\att$.

Consider two points $\gamma(p,t)$ and $\gamma(q,t)$ which are reflection symmetric about the hyperplane $P_0$ in the sense that
\begin{equation}
\label{notion of symmetric points}
    s_q=-s_p\in[0,\frac{L(t)}{2}],
\end{equation}
where $s_q=s(q,t),s_p=s(p,t)$ are the corresponding arc-length parameters of the points $\gamma(q,t)$ and $\gamma(p,t)$.

For simplicity, let us use $s$ to denote $s_q$. Consequently $s_p=-s$.

With above notation, recall that the intrinsic distance $l=l(p,q)$ is the length of one arc between the points $\gamma(p),\gamma(q)$. We define
\begin{equation}
\label{definition of alpha in terms of s}
\alpha=\frac{\pi l}{L}=\frac{\pi}{L}\int_p^q1ds
=\frac{2\pi}{L}\int_0^q1ds
=\frac{2\pi s}{L}.
\end{equation}
Then $\psi$ (equation (\ref{definition of psi})) can be written as:
\begin{equation}
\label{expressions of psi}
    \psi=\frac{L}{\pi}\sin\alpha.
\end{equation}
The reflection symmetric distance ratio $I$ (Definition \ref{loose definition of the symmetric distance ratio}) can be written in the form:
\begin{equation}
I(q, t) 
= \frac{|\gamma(q,t)-\gamma(p,t)|}{\psi}
=\frac{|\gamma(q,t)-\gamma(p,t)|}
            {\frac{L}{\pi}\sin\left(\frac{2\pi s}{L}\right)}, 
\end{equation}
where
\begin{equation*}
    s=s(q, t)= -s(p,t)\in[0,\frac{1}{2}L(t)].
\end{equation*}
\subsection*{Proof of Theorem \ref{n dim-monotonicity of Huisken's distance ratio restricted to symmetric points}}
Without loss of generality, we may assume 
\[
P_0=\{(x,y,z_1,\cdots,z_{n-2})|y=0\}.
\]
Let $T_0(t)$ be the unit tangent vector at $A(t)$ (equation (\ref{track the movement of the intersection points})). By symmetry, we may assume 
\begin{equation*}
    T_0(t)=(0,1,0,\cdots,0)
\end{equation*}
by appropriately choosing the orientation of the parametrization.

We may also assume $y(q,t)=-y(p,t)\geq0$, thus
\begin{equation}
\label{expressions of I}
    I=I(q,t)=\frac{y(q,t)-y(p,t)}{\psi}
        =\frac{2y(q,t)}{\frac{L}{\pi}\sin\left(\frac{2\pi s}{L}\right)}
\end{equation}
where $s=s_q=s(q,t)\in[0,\frac{1}{2}L(t)]$.

By Hamilton's trick \cite{hamilton1986four} (see also \cite[Lemma 2.1.3]{mantegazza2011lecture}), based on Lemma \ref{the lemma on preservation of symmetric two-crossing condition}, to prove Theorem \ref{n dim-monotonicity of Huisken's distance ratio restricted to symmetric points}, we only need to prove the following proposition.
\begin{prop}
\label{the proposition that I_t is non-negative}
    At any time $t\in[0,T)$, if $q_0$ is a global minimum point of the symmetric distance ratio $I(\cdot,t)$ then $I_t(q_0,t)\geq0$. 
\end{prop}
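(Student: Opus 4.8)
The plan is to derive a clean reaction--diffusion equation for $I$ along the one-parameter family of symmetric pairs, and then to check the sign of the reaction term at a spatial minimum. First I would record the kinematic facts: along the flow the arc-length element obeys $\partial_t(ds)=-k^2\,ds$, so that $[\partial_t,\partial_s]=k^2\partial_s$, $\partial_t L=-\int_\gamma k^2\,ds$, and---since $A(t)$ is a fixed material point, being a fixed point of the time-independent reflection involution $\sigma$ with $s=0$ there---the arc-length $s=s(q,t)$ measured from $A$ satisfies $\partial_t s=-\int_0^s k^2\,ds'$. Because the chord between symmetric points is axis-aligned, $d=2y(q)$, and the symmetry $\gamma_{ss}(p)=R_0\gamma_{ss}(q)$ of the curvature vector yields the two-for-one identity $\partial_t d=2y_{ss}$; note that this equals $\partial_s^2 d$, since $d$ is twice a single coordinate.

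Using $\psi_{ss}=-\tfrac{4\pi^2}{L^2}\psi$ (clear from $\psi=\tfrac{L}{\pi}\sin\tfrac{2\pi s}{L}$) together with $d=I\psi$, I would expand $d_{ss}=\psi\,\partial_s^2 I+2\psi_s\,\partial_s I-\tfrac{4\pi^2}{L^2}d$ and substitute into $\partial_t I=(\partial_t d-I\,\partial_t\psi)/\psi=(d_{ss}-I\,\partial_t\psi)/\psi$ to obtain
\[
\partial_t I=\partial_s^2 I+\frac{2\psi_s}{\psi}\,\partial_s I-I\Big(\frac{4\pi^2}{L^2}+\frac{\partial_t\psi}{\psi}\Big).
\]
At the global minimum $q_0$ one has $\partial_s I=0$ and $\partial_s^2 I\ge 0$, so $I_t(q_0)\ge 0$ reduces to showing that the reaction coefficient $R:=\tfrac{4\pi^2}{L^2}+\partial_t\psi/\psi$ satisfies $R\le 0$ at $q_0$. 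Computing $\partial_t\psi/\psi$ from $\partial_t L$ and $\partial_t s$ gives $\partial_t\psi/\psi=\tfrac{K_L}{L}(\alpha\cot\alpha-1)-\tfrac{2\pi\cot\alpha}{L}K_s$, where $K_s=\int_0^s k^2\,ds'$ and $K_L=\int_\gamma k^2\,ds$, so that $R\le 0$ is equivalent to
\[
K_L\,(1-\alpha\cot\alpha)+2\pi\cot\alpha\,K_s\ge\frac{4\pi^2}{L}.
\]
A reassuring check is that the round circle, for which $I\equiv 1$, turns every step above into an equality; the target inequality is therefore sharp, with the circle as its rigidity case.

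The main obstacle is precisely this last inequality, because $R$ is \emph{not} pointwise nonpositive: at a crossing point of low curvature (e.g.\ $A$ at the end of a short axis of a thin symmetric ellipse) one checks $R>0$ near $\alpha=0$, so the minimality of $q_0$ must be used essentially. The tools I would assemble are: (i) Fenchel's theorem $\int_\gamma k\,ds\ge 2\pi$ with Cauchy--Schwarz, giving $K_L\ge 4\pi^2/L$ and settling $\alpha=\pi/2$ where $\cot\alpha=0$; (ii) the symmetry bound $K_s\le\tfrac12 K_L$ from evenness of $k^2$ about $A$ and $B$; and (iii) the global minimality profile $y(s')\ge\tfrac{(\min I)L}{2\pi}\sin\tfrac{2\pi s'}{L}$, with equality and tangency at $q_0$, which recovers the first- and second-order conditions $y_s=I\cos\alpha$ and $y_{ss}\ge-\tfrac{2\pi}{L}I\sin\alpha$ and, more to the point, forces enough turning on the arcs $[A,q_0]$ and $[q_0,B]$ (via $\int_0^{s_0}k\,ds\ge\arccos(I\cos\alpha)$) to drive a weighted total-curvature estimate. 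The delicate part will be combining these partial-turning bounds against the $\alpha$-dependent weights $1-\alpha\cot\alpha$ and $2\pi\cot\alpha$ so as to beat $4\pi^2/L$ for every admissible $\alpha\in(0,\pi)$; this is where the argument, following Huisken's comparison with the sine profile but adapted to the symmetric setting, should demand the most care.
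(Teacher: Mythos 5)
Your setup is sound as far as it goes, and it is in fact the paper's own argument in different packaging: at the minimum point, your reaction--diffusion identity with $I_s=0$, $I_{ss}\ge0$ reduces to the paper's estimate $I_t(q_0,t)\ge -I\bigl(\tfrac{4\pi^2}{L^2}+\psi_t/\psi\bigr)$, obtained there from the first and second variation, and your target inequality $K_L(1-\alpha\cot\alpha)+2\pi\cot\alpha\,K_s\ge\tfrac{4\pi^2}{L}$ (with $K_L=\int_{S^1}k^2\,ds$, $K_s=\int_0^{s_0}k^2\,ds$) is precisely what the paper verifies. The genuine gap is that you never prove this inequality: you list ingredients and then defer ``combining these partial-turning bounds against the $\alpha$-dependent weights'' as the step demanding the most care, over ``every admissible $\alpha\in(0,\pi)$''. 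Two ideas are missing, and they are exactly what turns this step from delicate into routine. First, since $\sin\alpha=\sin(\pi-\alpha)$, neither $\psi$ nor $I$ depends on which arc joins the symmetric pair, so one may swap the roles of the two crossing points $A(t)$ and $B(t)$ and assume $s_0\le\tfrac14 L$, i.e.\ $\alpha\in(0,\tfrac{\pi}{2}]$ (equation (\ref{the equation that s_0 can be assumed between 0 and L over 4}) in the paper); this makes both weights nonnegative, $1-\alpha\cot\alpha\ge0$ and $\cot\alpha\ge0$, and disposes of the regime $\alpha>\tfrac{\pi}{2}$ that you flag as problematic. Second, you need $I(q_0,t)\le1$ (Lemma \ref{min of the symmetric distance ratio is no more than 1}, which follows from $\lim_{s\to0}I=1$ together with global minimality) to convert your turning bound $\int_0^{s_0}k\,ds\ge\arccos(I\cos\alpha)$ into $\int_0^{s_0}k\,ds\ge\alpha$; you invoke this implicitly but never state or justify it.

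With these two points the finish takes three lines and uses only your tools (i) and (iii); your symmetry bound (ii), $K_s\le\tfrac12 K_L$, is never needed (it points the wrong way once $\cot\alpha\ge0$). Indeed, Fenchel plus Cauchy--Schwarz gives $K_L\ge\tfrac{4\pi^2}{L}$, hence $K_L(1-\alpha\cot\alpha)\ge\tfrac{4\pi^2}{L}(1-\alpha\cot\alpha)$; Cauchy--Schwarz on the arc gives $K_s\ge\tfrac{1}{s_0}\bigl(\int_0^{s_0}k\,ds\bigr)^2\ge\tfrac{\alpha^2}{s_0}=\tfrac{2\pi\alpha}{L}$, hence $2\pi\cot\alpha\,K_s\ge\tfrac{4\pi^2}{L}\alpha\cot\alpha$; adding the two bounds yields exactly $\tfrac{4\pi^2}{L}$, consistent with your observation that the circle gives equality. (The case $s_0=0$ must be treated separately, as the paper does, since there $I\to1$ and $I_t=0$.) So your approach coincides with the paper's, but as written the proposal stops short of a proof: the reduction to $\alpha\le\tfrac{\pi}{2}$ and the bound $I(q_0,t)\le1$ are the missing steps without which your outline cannot be closed.
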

The rest of the section is devoted to proving Proposition \ref{the proposition that I_t is non-negative}.
\begin{lem}
\label{min of the symmetric distance ratio is no more than 1}
If $q_0$ is a global minimum point of $I(\cdot,t)$ then $I(q_0,t)\leq1$.
\end{lem}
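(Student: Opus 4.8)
The plan is to read off the boundary behaviour of $I(\cdot,t)$ at the crossing point $A(t)$ and to exploit that $q_0$ is a \emph{global} minimum. In the parametrization of (\ref{expressions of I}) the parameter $s=s_q$ ranges over $[0,\tfrac12 L(t)]$, and the endpoint $s=0$ corresponds to $q=p=A(t)$, where both the numerator $2y(q,t)$ and the denominator $\frac{L}{\pi}\sin(\frac{2\pi s}{L})$ vanish. I would therefore compute $\lim_{s\to 0^+} I(q,t)$, show it equals $1$, and then conclude $I(q_0,t)\le 1$ by global minimality.

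For the computation I parametrize the curve by arc length $s$ near $A(t)$, with $s=0$ at $A(t)$. Since $\gamma_s$ is the unit tangent and $\gamma_s(A(t))=T_0(t)=(0,1,0,\dots,0)$, the $y$-coordinate satisfies $y=0$ and $\partial_s y=1$ at $s=0$, so $y(q,t)=s+O(s^2)$ as $s\to0^+$; hence the numerator is $2y(q,t)=2s+O(s^2)$. Taylor expanding the denominator gives $\frac{L}{\pi}\sin(\frac{2\pi s}{L})=2s+O(s^3)$. Dividing,
\[
I(q,t)=\frac{2s+O(s^2)}{2s+O(s^3)}\xrightarrow[s\to0^+]{}1 .
\]
Equivalently one may apply L'H\^opital's rule to this $0/0$ form, or carry out the same expansion at the other crossing point $B(t)$, which sits at $s=\tfrac12 L(t)$ by reflection symmetry.

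Finally, since $q_0$ is a global minimum of $I(\cdot,t)$, we have $I(q_0,t)\le I(q,t)$ for every admissible $q$ (i.e. $s_q\in(0,\tfrac12 L)$); letting $q\to A(t)$ gives $I(q_0,t)\le 1$, as claimed. The only point needing care is the first-order expansion $y(q,t)=s+O(s^2)$, which rests on the fact that the transversal crossing at $A(t)$ forces the unit tangent there to be normal to $P_0$, i.e. equal to $T_0(t)=(0,1,0,\dots,0)$ after the orientation choice already fixed in the setup; granting this, the estimate and hence the lemma follow immediately.
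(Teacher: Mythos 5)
Your proposal is correct and follows essentially the same route as the paper: both compute $\lim_{s\to 0^+}I(s,t)=1$ at the crossing point $A(t)$ using $y(0,t)=0$, $y_s(0,t)=1$ (you via Taylor expansion, the paper via the difference quotient $\lim y/s=y_s(0,t)$), and then invoke global minimality to conclude $I(q_0,t)\leq 1$. One small caveat: it is the reflection symmetry (which forces $R_0T_0=-T_0$), not transversality of the crossing, that makes the tangent at $A(t)$ normal to $P_0$, but since this orientation choice is already fixed in the paper's setup your argument stands as written.
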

\begin{proof}[Proof of Lemma \ref{min of the symmetric distance ratio is no more than 1}]
Since $T_0(t)=(0,1,0,\cdots,0)$, one can compute:
\begin{equation}
\label{limit of I at s=0}
\lim\limits_{s\rightarrow0}I(s,t)
=\lim\limits_{s\rightarrow0}\frac{2y(s,t)}{\frac{L}{\pi}\frac{2\pi s}{L}}
=\lim\limits_{s\rightarrow0}\frac{y(s,t)}{s}
=\lim\limits_{s\rightarrow0} y_s(s,t)
=y_s(0,t)
=1.
\end{equation}
Thus $$I(q_0,t)\leq\lim\limits_{s\rightarrow0}I(s,t)=1.$$
\end{proof}
Let $q_0$ be a global minimum point of $I(\cdot,t)$. We denote by $s_0:=s_{q_0}$ the corresponding arc-length.

To prove Proposition \ref{the proposition that I_t is non-negative}, we may assume   
\begin{equation}
\label{the equation that s_0 can be assumed between 0 and L over 4}
    s_0\in(0,\frac{1}{4}L(t)]
\end{equation}
for the following reasons.

If $s_{0}=0$, then by the proof of Lemma \ref{min of the symmetric distance ratio is no more than 1}
$$I_t(q_0,t)=\frac{d}{dt}1=0.$$ 
Proposition \ref{the proposition that I_t is non-negative} is proved in this case. Thus we may assume $s_{0}>0$.

The identity $\sin\alpha=\sin(\pi-\alpha)$ implies that the symmetric distance ratio $I$ doesn't depend on the choice of the arcs connecting $\gamma(p,t),\gamma(q,t)$. By swapping $A(t), B(t)$ if needed, we can assume $s_{0}\leq \frac{L}{4}$, where $A(t), B(t)$ are defined in equation (\ref{track the movement of the intersection points}). 
\begin{lem}
If $q_0$ is a global minimum point of $I(\cdot,t)$ then the following holds at $q_0$.
\begin{enumerate}[label=(\alph*)]
 \item By first variation, 
    \begin{equation}
    \label{first variation}
    y_{s}=2\frac{y}{\psi}\cos\alpha.
    \end{equation}
 \item By second variation, 
    \begin{equation}
    \label{second variation}
    y_{ss}\psi+\frac{4\pi}{L} y\sin\alpha\geq0.
    \end{equation}
\end{enumerate}
\end{lem}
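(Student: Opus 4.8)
The plan is to freeze the time $t$ and view the symmetric distance ratio (\ref{expressions of I}) as a \emph{smooth function of the single arc-length variable} $s=s_q$ on the open interval $(0,\tfrac{L}{2})$. The decisive structural point is that, under reflection symmetry, the minimization is genuinely one-dimensional: since $p$ is slaved to $q$ by $s_p=-s_q$, moving along the curve advances $q$ and retracts its reflection $p$ simultaneously, so I only ever differentiate the reduced expression $I=\frac{2y}{\psi}$ with $\psi=\frac{L}{\pi}\sin\alpha$, $\alpha=\frac{2\pi s}{L}$, in the one variable $s$, and never vary the two endpoints independently. Because $s_0\in(0,\tfrac14 L]$ lies in the interior of $(0,\tfrac{L}{2})$ and $I$ is smooth there, the standard interior first- and second-derivative tests apply at the minimum, giving $\frac{dI}{ds}(s_0)=0$ and $\frac{d^2I}{ds^2}(s_0)\geq0$.

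The only inputs I need about $\psi$ are its $s$-derivatives. Since $L$ is constant in $s$ and $\frac{d\alpha}{ds}=\frac{2\pi}{L}$, a direct computation gives $\psi_s=2\cos\alpha$ and $\psi_{ss}=-\frac{4\pi}{L}\sin\alpha$. For part (a) I would expand
\begin{equation*}
\frac{dI}{ds}=\frac{2y_s}{\psi}-\frac{2y\psi_s}{\psi^2}=\frac{2y_s}{\psi}-\frac{4y\cos\alpha}{\psi^2},
\end{equation*}
and setting this to zero at $s_0$ rearranges at once to $y_s=2\frac{y}{\psi}\cos\alpha$, which is (\ref{first variation}). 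For part (b) I would differentiate once more and clear denominators by multiplying the inequality $\frac{d^2I}{ds^2}(s_0)\geq0$ through by the positive factor $\tfrac12\psi^3$, obtaining
\begin{equation*}
y_{ss}\psi^2-2y_s\psi_s\psi-y\psi_{ss}\psi+2y\psi_s^2\geq0.
\end{equation*}
The crucial cancellation is that, substituting the first-variation identity (a) in the form $y_s\psi=2y\cos\alpha$ together with $\psi_s=2\cos\alpha$, the two terms $-2y_s\psi_s\psi$ and $+2y\psi_s^2$ equal $-8y\cos^2\alpha$ and $+8y\cos^2\alpha$ and annihilate each other. What survives is $y_{ss}\psi^2-y\psi_{ss}\psi\geq0$; inserting $\psi_{ss}=-\frac{4\pi}{L}\sin\alpha$ gives $y_{ss}\psi^2+4y\sin^2\alpha\geq0$, and dividing by $\psi=\frac{L}{\pi}\sin\alpha>0$ yields exactly (\ref{second variation}).

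The main thing to get right is not any single computation but the conceptual point that, under reflection symmetry, the ratio is optimized along a one-parameter family rather than over independent endpoints; this is what collapses the usual two first-order conditions of the distance comparison principle into the single identity (a), and it is the substitution of (a) that triggers the cancellation making (b) clean. I would also take a moment to justify that $s_0$ is truly an interior, smooth critical point (so that the second-order test is available), using that $\gamma$ is a smooth immersion and $q\mapsto p=R_0(q)$ is smooth on $(0,\tfrac{L}{2})$. The remaining work — the two derivatives of $\psi$ and the bookkeeping in $\frac{d^2I}{ds^2}$ — is routine.
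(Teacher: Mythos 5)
Your proposal is correct and takes essentially the same route as the paper: freeze $t$, treat $I=2y/\psi$ as a smooth function of the single variable $s$, apply the interior first- and second-derivative tests at $s_0$, and use $\psi_s=2\cos\alpha$, $\psi_{ss}=-\frac{4\pi}{L}\sin\alpha$ together with the first-variation identity to reduce the second-order condition to (\ref{second variation}). The only cosmetic difference is bookkeeping: the paper writes $I_{ss}$ so that the factor $y_s\psi-y\psi_s$ appears explicitly and vanishes at the critical point, whereas you expand fully and verify the cancellation of the $\pm 8y\cos^2\alpha$ terms by hand.
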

\begin{proof}[Proof of the lemma]
By direct computation of derivatives of equation (\ref{expressions of I}),
$$I_{s}=\frac{2}{\psi^2}(y_{s}\psi-\psi_{s}y)$$
and
\begin{align*}
I_{ss}&=\frac{2}{\psi^2}(y_{s}\psi - y\psi_{s})_s + \Bigl(\frac{2}{\psi^2}\Bigr)_s (y_s\psi - y\psi_s) \\
    &=\frac{2}{\psi^2} (y_{ss}\psi-y\psi_{ss}) + \Bigl(\frac{2}{\psi^2}\Bigr)_s (y_s\psi - y\psi_s) .
\end{align*}
By equation (\ref{definition of alpha in terms of s}) and equation (\ref{expressions of psi}), we have
$$
\psi_{s}=2\cos\alpha, \qquad
\psi_{ss} = -\frac{4\pi}{L}\sin\alpha .
$$
Since $q_0$ is a minimum point of $I$ we have $I_s(q_0, t)=0$, which implies
\[
0=y_s\psi-y\psi_s   =y_{s}\psi-2y\cos\alpha. 
\]
Combining with the second variation $I_{ss}(q_0, t)\geq 0$, we have
\[
0\leq y_{ss}\psi - y\psi_{ss} = y_{ss}\psi+\frac{4\pi}{L} y\sin\alpha.
\]
\end{proof}

Recall that we chose the tangential motion of our parametrization $\gamma$ such that $\gamma_t=\gamma_{ss}$ (equation \eqref{equation of CSF}).
\begin{lem}
\label{time derivative of psi}
Under the evolution of SCSF, one has
\begin{equation}
\label{evolution of psi}
    \psi_t\leq-\frac{\sin(\alpha)}{\pi}\frac{4\pi^2}{L} +
    \frac{\alpha\cos\alpha}{\pi}\frac{4\pi^2}{L}-2\cos\alpha\int_0^qk^2ds. 
\end{equation}
\end{lem}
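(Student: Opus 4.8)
The plan is to compute $\psi_t$ exactly, reduce everything to the two geometric rates $L_t$ and $s_t$, and then control the total-curvature integral that appears with a Fenchel-type inequality. Differentiating $\psi=\frac{L}{\pi}\sin\alpha$ with $\alpha=\frac{2\pi s}{L}$, and using $\alpha_t=\frac{2\pi}{L}s_t-\frac{\alpha}{L}L_t$, I would first obtain
\[
\psi_t=\frac{L_t}{\pi}\bigl(\sin\alpha-\alpha\cos\alpha\bigr)+2\cos\alpha\,s_t,
\]
so the problem is entirely reduced to evaluating $L_t$ and $s_t$.

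For $L_t$ I would invoke the standard evolution of the arc-length element under SCSF, $\partial_t(ds)=-k^2\,ds$, which follows from $\gamma_t=\gamma_{ss}$ because $\partial_t|\gamma_u|=\gamma_u\cdot\partial_u\gamma_{ss}/|\gamma_u|=-k^2|\gamma_u|$; integrating gives $L_t=-\int_0^L k^2\,ds$. The computation of $s_t$ is the step I expect to be the most delicate, and it is where the symmetry is essential: the arc-length origin $A(t)$ must be recognized as a \emph{fixed material point}. Indeed, since $\gamma\att$ is $R_0$-invariant, the induced involution of $S^1$ is independent of $t$ (by uniqueness applied to the equivariant flow), and its two fixed parameters are exactly the two points sitting on $P_0$; moreover the velocity $\gamma_{ss}$ at such a point is itself $R_0$-invariant, hence tangent to $P_0$, so $A(t)$ never changes its parameter and contributes no boundary term. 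Writing $s(q,t)=\int_{A(t)}^{\gamma(q,t)}ds$ and differentiating with $q$ held fixed, only the $\partial_t(ds)=-k^2\,ds$ contribution survives, yielding $s_t=-\int_0^q k^2\,ds$.

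Substituting both rates gives the exact identity
\[
\psi_t=\frac{1}{\pi}\bigl(\alpha\cos\alpha-\sin\alpha\bigr)\int_0^L k^2\,ds-2\cos\alpha\int_0^q k^2\,ds .
\]
To finish I would observe that $\tfrac{d}{d\alpha}(\sin\alpha-\alpha\cos\alpha)=\alpha\sin\alpha\ge 0$ on $(0,\pi)$ with value $0$ at $\alpha=0$, so $\alpha\cos\alpha-\sin\alpha<0$ for every admissible $\alpha\in(0,\pi)$. Hence the coefficient of $\int_0^L k^2\,ds$ is negative, and replacing $\int_0^L k^2\,ds$ by the smaller quantity $\frac{4\pi^2}{L}$ yields an upper bound; this lower bound $\int_0^L k^2\,ds\ge\frac{4\pi^2}{L}$ is precisely Fenchel's theorem $\int_0^L k\,ds\ge 2\pi$ combined with Cauchy--Schwarz $\bigl(\int_0^L k\,ds\bigr)^2\le L\int_0^L k^2\,ds$. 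The last term is untouched since it already carries $\int_0^q k^2\,ds$, and the result is exactly the claimed estimate (\ref{evolution of psi}).
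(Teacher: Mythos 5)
Your proof is correct and follows essentially the same route as the paper: both reduce $\psi_t$ to the rates $L_t=-\int_{S^1}k^2\,ds$ and $s_t=-\int_0^q k^2\,ds$, arrive at the identity $\psi_t=\frac{1}{\pi}(\alpha\cos\alpha-\sin\alpha)\int_{S^1}k^2\,ds-2\cos\alpha\int_0^q k^2\,ds$, and then use the sign of $\sin\alpha-\alpha\cos\alpha$ together with Fenchel's theorem and Cauchy--Schwarz (H\"older in the paper) to replace $\int_{S^1}k^2\,ds$ by $\frac{4\pi^2}{L}$. Your extra care in justifying $s_t$ via the fact that $A(t)$ is a fixed material point of the symmetric flow is a detail the paper leaves implicit (it follows from the proof of Lemma \ref{the lemma on preservation of symmetric two-crossing condition}), but it does not change the argument.
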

\begin{proof}
Based on
$$L_t=-\int_{S^1}k^2ds$$
and
$$s_t=\left(\int_0^q1ds\right)_t=-\int_0^qk^2ds,$$ 
by direct computation,
\begin{align*}
\psi_t&=-\frac{\sin\alpha}{\pi}\int_{S^1}k^2ds +\frac{L}{\pi}(\cos\alpha) \alpha_t \\
&=-\frac{\sin\alpha}{\pi}\int_{S^1}k^2ds +\frac{L}{\pi}(\cos\alpha)
\left(\frac{2\pi}{L^2}\int_{S^1}k^2ds\int_0^q1ds-\frac{2\pi}{L}\int_0^qk^2ds\right)\\ 
&=-\frac{\sin\alpha}{\pi}\int_{S^1}k^2ds +(\cos\alpha)
\left(\frac{2\pi}{L\pi}\int_{S^1}k^2ds\int_0^q1ds-2\int_0^qk^2ds\right) \\  
&=-\frac{\sin\alpha}{\pi}\int_{S^1}k^2ds +
\frac{\alpha\cos\alpha}{\pi}\int_{S^1}k^2ds-2\cos\alpha\int_0^qk^2ds\\
&=\left(-\frac{\sin\alpha}{\pi} +
\frac{\alpha\cos\alpha}{\pi}\right)\int_{S^1}k^2ds-2\cos\alpha\int_0^qk^2ds.
\end{align*}

Because $\sin\alpha>\alpha\cos\alpha \text{ for }\alpha\in(0,\pi]$, by H\"{o}lder's inequality and Fenchel's theorem, one has 
$$\int_{S^1}k^2ds\geq\frac{1}{L}\left(\int_{S^1}kds\right)^2\geq\frac{4\pi^2}{L}.$$
\end{proof}

\begin{lem}
\label{time derivative of the symmetric distance ratio}
If $q_0$ is a global minimum point of $I(\cdot,t)$ then 
\begin{equation*}
I_t(q_0,t)
\geq\frac{4y\cos\alpha}{\psi^2\int_0^{q_0}1ds}\left((\int_0^{q_0}kds)^2-\alpha^2\right).
\end{equation*}
\end{lem}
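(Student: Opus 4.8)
The goal of Lemma \ref{time derivative of the symmetric distance ratio} is to produce a clean lower bound on $I_t(q_0,t)$ from which Proposition \ref{the proposition that I_t is non-negative} will follow. My plan is to combine the three ingredients already assembled: the evolution equation $\gamma_t=\gamma_{ss}$ (which gives $y_t = y_{ss}$ since the $y$-coordinate commutes with the spatial derivatives), the first and second variation conditions \eqref{first variation} and \eqref{second variation} holding at the minimum point $q_0$, and the estimate for $\psi_t$ from Lemma \ref{time derivative of psi}. First I would differentiate $I = 2y/\psi$ in time to get
\begin{equation*}
I_t = \frac{2y_t\psi - 2y\psi_t}{\psi^2} = \frac{2y_{ss}}{\psi} - \frac{2y\psi_t}{\psi^2}.
\end{equation*}

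Next I would handle the two terms separately at $q_0$. For the first term, the second variation inequality \eqref{second variation} gives $y_{ss}\psi \geq -\tfrac{4\pi}{L} y\sin\alpha$, so $\tfrac{2y_{ss}}{\psi} \geq -\tfrac{8\pi}{L}\tfrac{y\sin\alpha}{\psi^2}$; using $\psi = \tfrac{L}{\pi}\sin\alpha$ this simplifies to a term proportional to $\tfrac{y}{\psi^2}$ times a curvature-free quantity. For the second term, I would substitute the bound on $\psi_t$ from \eqref{evolution of psi}. The key point is that both the $-\tfrac{\sin\alpha}{\pi}\tfrac{4\pi^2}{L}$ and $\tfrac{\alpha\cos\alpha}{\pi}\tfrac{4\pi^2}{L}$ pieces of $\psi_t$ are curvature-independent and should cancel against the contribution of the second-variation term, leaving only the $-2\cos\alpha\int_0^{q_0}k^2\,ds$ piece to contribute to the final bound. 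The cancellation of the curvature-free parts is the step I expect to require the most careful bookkeeping, and it is the main obstacle: one must verify that the constant terms coming from $y_{ss}$ and from the $\sin\alpha,\alpha\cos\alpha$ parts of $\psi_t$ combine exactly into the $-\alpha^2$ term appearing in the statement.

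Finally, I would convert the surviving curvature integral $\int_0^{q_0}k^2\,ds$ into the term $(\int_0^{q_0}k\,ds)^2$ that appears in the conclusion. Since $q_0$ satisfies $0 < s_0 \le \tfrac{L}{4}$ by \eqref{the equation that s_0 can be assumed between 0 and L over 4}, we have $\cos\alpha \geq 0$, so the coefficient $-2\cos\alpha$ multiplying $\int_0^{q_0}k^2\,ds$ has a favorable sign when combined with the Cauchy--Schwarz (Hölder) estimate
\begin{equation*}
\int_0^{q_0}k^2\,ds \;\geq\; \frac{1}{\int_0^{q_0}1\,ds}\left(\int_0^{q_0}k\,ds\right)^2,
\end{equation*}
exactly as in the proof of Lemma \ref{time derivative of psi}. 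Using $\int_0^{q_0}1\,ds = s_0$ and recalling $\alpha = \tfrac{2\pi s_0}{L}$, I would assemble the first-variation relation \eqref{first variation} (to express any stray $y_s$ in terms of $\tfrac{y}{\psi}\cos\alpha$) together with these pieces to arrive at the claimed inequality
\begin{equation*}
I_t(q_0,t) \;\geq\; \frac{4y\cos\alpha}{\psi^2\int_0^{q_0}1\,ds}\left(\Bigl(\int_0^{q_0}k\,ds\Bigr)^2 - \alpha^2\right).
\end{equation*}
The factor $\alpha^2$ then represents precisely the reorganized curvature-free contribution, and the remaining work toward Proposition \ref{the proposition that I_t is non-negative} will be to show the bracket is nonnegative, i.e. $\int_0^{q_0}k\,ds \geq \alpha$, which is a geometric statement about the turning of the arc and is expected to be treated separately.
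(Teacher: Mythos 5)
Your proposal is correct and follows essentially the same route as the paper: differentiate $I=2y/\psi$ in time, use $y_t=y_{ss}$ and the second variation bound, substitute the upper bound on $\psi_t$ (valid since $y\geq 0$), and finish with Cauchy--Schwarz using $\cos\alpha\geq 0$. The bookkeeping you flagged works out exactly as in the paper: the $\sin\alpha$ terms cancel identically, while the $\alpha\cos\alpha$ term survives and, after multiplying by $s_0=\int_0^{q_0}1\,ds$ and using $\alpha=\tfrac{2\pi s_0}{L}$, becomes precisely the $-\alpha^2$ in the bracket (the first variation is never actually needed in this step).
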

\begin{proof}
By computing the time derivative of equation (\ref{expressions of I}) and the second variation (equation (\ref{second variation})),
\begin{equation*}
    I_t(q_0,t)=\frac{2}{\psi^2}(y_t\psi-\psi_t y)=\frac{2}{\psi^2}(y_{ss}\psi-\psi_t y)\geq\frac{2}{\psi^2}(-\frac{4\pi}{L} y\sin\alpha-\psi_t y).
\end{equation*}
Combining Lemma \ref{time derivative of psi}, noticing $y(q_0,t)\geq0$, we have
\begin{equation*}
\begin{split}
    I_t(q_0,t)\geq\frac{2}{\psi^2}\left(\underbrace{-\frac{4\pi}{L} y\sin\alpha+y\frac{\sin(\alpha)}{\pi}\frac{4\pi^2}{L}}_0
    -y\frac{\alpha\cos\alpha}{\pi}\frac{4\pi^2}{L}+2y\cos\alpha\int_0^{q_0}k^2ds\right)\\
    =\frac{4y\cos\alpha}{\psi^2}\left(\int_0^{q_0}k^2ds-\alpha\frac{2\pi}{L}\right)
    =\frac{4y\cos\alpha}{\psi^2\int_0^{q_0}1ds}\left(\int_0^{q_0}1ds\int_0^{q_0}k^2ds-\alpha\frac{2\pi}{L}\int_0^{q_0}1ds\right)\\
    =\frac{4y\cos\alpha}{\psi^2\int_0^{q_0}1ds}\left(\int_0^{q_0}1ds\int_0^{q_0}k^2ds-\alpha^2\right)
    \geq\frac{4y\cos\alpha}{\psi^2\int_0^{q_0}1ds}\left((\int_0^{q_0}kds)^2-\alpha^2\right).
\end{split}
\end{equation*}
\end{proof}

\begin{lem}
\label{geodesic distance is shorter}
If $q_0$ is a global minimum point of $I(\cdot,t)$ then
$$\int_0^{q_0}kds\geq\alpha.$$
\end{lem}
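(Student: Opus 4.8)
The plan is to read $\int_0^{q_0}k\,ds$ as the length of the tangent indicatrix on the unit sphere $S^{n-1}$ and to bound it below by a spherical geodesic distance that the first variation identity pins down exactly.

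First I would record the sign information I need. Since $q_0$ was arranged so that $s_0\in(0,\tfrac14 L(t)]$ (equation \eqref{the equation that s_0 can be assumed between 0 and L over 4}), we have $\alpha=\tfrac{2\pi s_0}{L}\in(0,\tfrac{\pi}{2}]$ and hence $\cos\alpha\ge 0$. Feeding Lemma \ref{min of the symmetric distance ratio is no more than 1}, namely $I(q_0,t)\le 1$, into the first variation \eqref{first variation} rewritten as $y_s=I(q_0,t)\cos\alpha$ gives
\[
y_s(q_0,t)=I(q_0,t)\cos\alpha\le\cos\alpha .
\]

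Next comes the geometric translation. Writing $T(s)=\gamma_s$ for the unit tangent and recalling $T(0)=T_0=(0,1,0,\dots,0)$, the second coordinate of $T(q_0)$ is exactly $\langle T(q_0),T(0)\rangle=y_s(q_0,t)$. Thus if $\theta$ denotes the angle between $T(0)$ and $T(q_0)$, then $\cos\theta=y_s(q_0,t)\le\cos\alpha$, and since cosine is decreasing on $[0,\pi]$ this forces $\theta\ge\alpha$.

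Finally I would invoke the standard tangent-indicatrix bound. As $s$ runs over $[0,s_0]$, the tangent $T(s)$ traces a curve on $S^{n-1}$ with speed $|T_s|=|\gamma_{ss}|=k$, so $\int_0^{q_0}k\,ds$ equals the length of this curve joining $T(0)$ to $T(q_0)$; any such curve is at least as long as the spherical geodesic between its endpoints, whose length is the angle $\theta$. Hence $\int_0^{q_0}k\,ds\ge\theta\ge\alpha$, which is the claim — and this last inequality is precisely the assertion encoded in the lemma's name, that the geodesic distance is shorter. The only step requiring care is the clean identification $\cos\theta=y_s(q_0,t)$ together with the use of $I\le 1$ to convert it into $\theta\ge\alpha$; everything else is routine once one sees that $\int k\,ds$ is an arc length on the sphere.
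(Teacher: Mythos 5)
Your proposal is correct and follows essentially the same route as the paper's proof: first variation plus $I(q_0,t)\le 1$ and $\alpha\in(0,\tfrac{\pi}{2}]$ to get $y_s\le\cos\alpha$, then the identification $T(q_0)\cdot T_0=y_s$ and the fact that the tangent indicatrix on $S^{n-1}$ is at least as long as the spherical geodesic between its endpoints. The only cosmetic difference is that you name the angle $\theta$ where the paper writes $\arccos(y_s)$ directly.
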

\begin{proof}
The first variation (equation (\ref{first variation})) says that
$$y_{s}=2\frac{y}{\psi}\cos\alpha.$$

Because of Lemma \ref{min of the symmetric distance ratio is no more than 1}, $I(q_0,t)=2\frac{y}{\psi}\leq1$ and $\alpha\in(0,\frac{\pi}{2}]$, thus $$y_{s}=2\frac{y}{\psi}\cos\alpha\leq\cos\alpha,$$
which implies
$$\arccos (y_{s}) \geq\alpha.$$

In addition, the length of the curve $T:[0,q_0]\rightarrow S^{n-1}$ is not less than the geodesic distance between the start point and end point of the curve $T$ on $S^{n-1}$, that is to say $$\int_0^{q_0}|T_s|ds\geq \arccos (T(q_0,t)\cdot T_0),$$
where $T(q_0,t)$ is the unit tangent vector at $\gamma(q_0,t)$ and $T_0=T(0,t)$ is the unit tangent vector at $s=0$.

Noticing that $|T_s|=k$ and $T(q_0,t)\cdot T_0=y_{s}$, $$\int_0^{q_0}kds=\int_0^{q_0}|T_s|ds\geq \arccos (T(q_0,t)\cdot T_0)=\arccos (y_{s}) \geq\alpha.$$
\end{proof}
\begin{proof}[Proof of Proposition \ref{the proposition that I_t is non-negative}]
By Lemma \ref{time derivative of the symmetric distance ratio} and Lemma \ref{geodesic distance is shorter}, at each time $t$, at each global minimum point $q_0$ of $I\att$, $I_t(q_0,t)\geq0$.
\end{proof}

\section{Singularities of Symmetric SCSF with one-to-one convex projections}
\label{section on proof of singularities}

\subsection*{Proof of Theorem \ref{theorem on singularites for symmetric case}}
We assume otherwise, Type~{II} singularities occur as $t\rightarrow T$.
By the blow-up results in \cite{altschuler1991singularities}, up to rescaling, SCSF $\gamma\att$ locally resembles a grim reaper, along some subsequence.

By \cite[Corollary 5.8]{sun2024curve}, the limiting grim reaper cannot be contained in a vertical plane.

Because $\gamma\att$ is reflection symmetric about the $xz$-plane and $yz$-plane, the limiting grim reaper has to be reflection symmetric about one of the $xz$-plane and $yz$-plane, for if it were not true, then $\gamma\att$ contains four disjoint parts that resemble grim reapers and it contradicts that $\gamma\att$ has a one-to-one convex projection onto the $xy$-plane. In more detail, the total curvature of the projection curve is no less than
\begin{equation*}
    4(\pi-\delta)>2\pi\text{ for some small }\delta>0,
\end{equation*}
which is impossible.

Thus the limiting grim reaper is reflection symmetric about the $xz$-plane or $yz$-plane. This contradicts Theorem \ref{n dim-monotonicity of Huisken's distance ratio restricted to symmetric points}.

As a result, SCSF $\gamma\att$ develops a Type~I singularity. Thus by the argument in \cite{huisken1990asymptotic}, any tangent flow is modeled on a self-shrinker. All shrinking curves in $\mathbb R^n$ are planar (see for example \cite[Lemma 5.1]{altschuler2013zoo}).

Among Abresch-Langer curves \cite{abresch1986normalized}, a circle of multiplicity one is the only curve that has a one-to-one convex projection. Then our result follows from \cite{schulze2014uniqueness}.

\subsection*{Acknowledgements}
The author is grateful to his advisors Sigurd Angenent and Hung Vinh Tran for their  guidance, support and their helpful comments on an earlier version.

\bibliographystyle{alpha}
\bibliography{main}
\end{document}